\documentclass[a4paper,11pt]{article}
\usepackage{amsmath,amsthm,amssymb}
\usepackage{hyperref}

\setlength{\textwidth}{16cm}
\setlength{\textheight}{22cm}
\hoffset-1.5cm
\voffset-1cm

\parindent 0pt
\parskip \bigskipamount

\makeatletter
\def\thm@space@setup{%
  \thm@preskip=\parskip \thm@postskip=0pt
}
\makeatother

\newtheorem{thm}{Theorem}

\newtheorem{corollary}[thm]{Corollary}

\theoremstyle{remark}  

\title{Odd Covers of Complete Graphs and Hypergraphs}
\author{Imre Leader\thanks{Department of Pure Mathematics and Mathematical Statistics, Centre for Mathematical Sciences, University of Cambridge, Wilberforce Road, Cambridge CB3 0WB, United Kingdom. Email: \texttt{I.Leader@dpmms.cam.ac.uk}.} \and Ta Sheng Tan\thanks{Institute of Mathematical Sciences, Faculty of Science, Universiti Malaya, 50603 Kuala Lumpur, Malaysia. Email: \texttt{tstan@um.edu.my}.}}

\begin{document}

\maketitle

\begin{abstract}
 The `odd cover number' of a complete graph is the smallest size of a family of complete bipartite graphs that covers each edge an odd number of times.
 For $n$ odd, Buchanan, Clifton, Culver, Nie, O'Neill, Rombach and Yin showed that the odd cover number of $K_n$ is equal to $(n+1)/2$ or $(n+3)/2$, and they conjectured that it is always $(n+1)/2$. We prove this conjecture.
 
 For $n$ even, Babai and Frankl showed that the odd cover number of $K_n$ is always at least $n/2$, and the above authors and Radhakrishnan, Sen and Vishwanathan gave some values of $n$ for which equality holds. We give some new examples.

 Our constructions arise from some very symmetric constructions for the corresponding problem for complete hypergraphs.
 Thus the odd cover number of the complete 3-graph $K_n^{(3)}$ is the smallest number of complete 3-partite 3-graphs such that each 3-set is in an odd number of them.
 We show that the odd cover number of $K_n^{(3)}$ is exactly $n/2$ for even $n$, and we show that for odd $n$ it is $(n-1)/2$ for infinitely many values of $n$.
 We also show that for $r=3$ and $r=4$ the odd cover number of $K_n^{(r)}$ is strictly less than the partition number, answering a question of Buchanan, Clifton, Culver, Nie, O'Neill, Rombach and Yin for those values of $r$.
\end{abstract}

%\emph{Keywords: }Odd covers, Graham-Pollak, Complete hypergraphs

\section{Introduction}
 The \emph{odd cover number} $b(n)$ of the complete graph $K_n$ is the minimum number of complete bipartite graphs needed so that each edge is in an odd number of these complete bipartite graphs.
 By considering the ranks of the adjacency matrices, Babai and Frankl~\cite{babai} showed that $b(n)$ is always at least $\lfloor n/2\rfloor$. 
 
 For $n$ even, 
 Radhakrishnan, Sen and Vishwanathan~\cite{radhakrishnan} gave infinitely many values of $n$ for which equality holds.
 More specifically, they showed that $b(n) = n/2$ whenever $n = 2(q^2+q+1)$ for some prime power $q\equiv 3\pmod 4$, and also whenever there exists a Hadamard matrix of order $n/2$. Note that the
 former values are all $2 \pmod 8$ while the latter are $0 \pmod 8$. These results were extended
 by Buchanan, Clifton, Culver, Nie, O'Neill, Rombach and Yin~\cite{buchanan}, who gave an
 elegant construction to show that
 in fact we always have $b(n)=n/2$ when $n \equiv 0 \pmod 8$. They also showed that for every
 even value of $n$ we have $b(n)=n/2$ or $(n/2)+1$.

 For $n$ odd, the authors of \cite{buchanan} showed that $b(n)$ is always equal to $(n+1)/2$ or
 $(n+3)/2$. They also showed that $b(n)=(n+1)/2$ whenever $n \equiv \pm 1 \pmod 8$. (These are
 immediate from their result about $n \equiv 0 \pmod 8$, since clearly $b(n-1) \le b(n)$ by removing
 a vertex, and also $b(n+1) \le b(n)+1$ by adding a new vertex and a star at that vertex.)
 They made the conjecture that in fact $b(n)=(n+1)/2$ for all odd values of $n$. Note that this
 cannot follow in a similar way by adding or removing a vertex from even cases, because there are
 several even values of $n$ for which we do not have $b(n)=n/2$ -- for example, this conjecture would
 assert that $b(13)=7$, whereas $b(12)=7$ and $b(14)=8$.

 Generalising the above odd cover problem to hypergraphs, fix $r \ge 2$: we now wish to cover the
 collection $K_n^{(r)}$ of all the $r$-sets from an $n$-set. 
 We say a set of complete $r$-partite $r$-graphs is an \emph{odd cover} of $K_n^{(r)}$ if each $r$-set is in an odd number of these complete $r$-partite $r$-graphs. The \emph{odd cover number} $b_r(n)$ is the smallest size of an odd cover of $K_n^{(r)}$.
 (Here as usual a \emph{complete $r$-partite $r$-graph} is specified by $r$ disjoint sets $A_1,A_2, \ldots A_r$ of vertices, and consists of those $r$-sets that meet every $A_i$.)

 By taking the `link' of a vertex, it is easy to see that $b_{r}(n)\ge b_{r-1}(n-1)$.
 Indeed, given an odd cover of $K_{n}^{(r)}$, let $v$ be a vertex, and for each complete $r$-partite $r$-graph in the cover, we form a complete $(r-1)$-partite $(r-1)$-graph by removing the class containing $v$.
 These complete $(r-1)$-partite $(r-1)$-graphs form an odd cover of $K_{n-1}^{(r-1)}$, as required.
 
 So in particular, $b_3(n)\ge \lfloor n/2\rfloor$ for all $n\ge 3$.
 %By taking the `link' of a vertex, it is straightforward that $b_r(n)\ge b_{r-1}(n-1)$, and so in particular, $b_3(n)\ge \lceil (n-1)/2\rceil$.
 We will show that this is sharp for all even $n$: $b_3(n) = n/2$.

 By taking a link, this implies that there is an odd cover of $K_{n-1}$ of size $n/2$ for 
 every even $n$, thus proving the above conjecture. What is interesting is that our constructions
 for $r=3$ are very symmetric, and this symmetry is somewhat lost when we pass to the link.
 So in a sense the key to understanding the graph case is to consider the 
 hypergraph case.
  
 We are also able to show that the above lower bound of $b_3(n) \ge \lfloor n/2 \rfloor$ is attained for infinitely many values of odd $n$.
 Specifically, we construct an odd cover of $K_n^{(3)}$ of size $(n-1)/2$ whenever 
 $n \equiv 1 \pmod 8$ and also when $n$ is of the form $3^k$. While the former comes from an
 analysis of odd covers of $K_n$ for $n \equiv 0 \pmod 8$, the latter is a direct construction,
 and it yields (by taking a link) new values in the graph case: it gives that $b(n)=n/2$ whenever
$n=3^k-1$, which are new values when $k$ is odd.
 
 The odd cover problem is related to the Graham-Pollak problem for partitioning the edge set of a complete hypergraph. 
 Let $f_r(n)$ be the minimum number of complete $r$-partite $r$-graphs needed to \emph{partition} the edge set of $K_n^{(r)}$.
 The Graham-Pollak theorem~\cite{graham1, graham2} asserts that $f_2(n) = n-1$. 
 For $r\ge 3$, Alon~\cite{alon1} showed that $f_3(n) = n-2$, and that
 $$\frac{2}{\binom{2\lfloor r/2 \rfloor}{\lfloor r/2\rfloor}}(1+o(1))\binom{n}{\lfloor r/2\rfloor}\le f_r(n)\le (1-o(1))\binom{n}{\lfloor r/2\rfloor}.$$
 There have been several improvements since Alon first studied the Graham-Pollak problem for hypergraphs. 
 See, for example, improvements to the lower bound in Cioab\v{a}, K\"{u}ndgen and Verstra\"{e}te~\cite{cioaba1} and improvements to the upper bound in Leader, Mili\'{c}evi\'{c} and Tan \cite{leader}, Leader and
 Tan \cite{leader2} and Babu and Vishwanathan \cite{babu}.
 Indeed, it is shown in \cite{leader2} that 
 $$f_r(n) \le c_r (1+o(1)) \binom{n}{\lfloor r/2 \rfloor} $$
 where the constants $c_r$ satisfy $c_r \le \frac{r}{2}(14/15)^{r/4}+o(1)$, so that in particular $c_r \rightarrow 0$
 as $r \rightarrow \infty$.
 
 Trivially, we must have $b_r(n)\le f_r(n)$.
 Having shown that $b(n) < f_2(n)$ for $n\ge 5$, Buchanan, Clifton, Culver, Nie, O'Neill, Rombach and Yin~\cite{buchanan} asked if this is also the case for larger uniformities.
 
 Our constructions above show that this is the case for $r=3$, giving $ b_3(n)\le \lceil n/2\rceil < f_3(n)$ for $n\ge 6$.
 We will also show that $b_4(n) < f_4(n)$ for $n$ sufficiently large.
 For values of $r$ greater than 4, the methods used in  \cite{leader2} for analysing $f_r(n)$ can be used to improve the upper bound on $b_r(n)$ mentioned in \cite{buchanan}. But unfortunately these improvements are not sufficiently strong to show that $b_r(n) < f_r(n)$ for any $r\ge 5$, because the
 known lower bounds on $f_r(n)$ are still too weak.

\section{Odd covers of complete 3-graphs}\label{sec2}

 In this section, we prove that $b_3(n) = \lfloor n/2\rfloor$ whenever $n$ is even, and also for $n = 3^k$ for any $k$ or $n\equiv 1\pmod{8}$.

 \begin{thm}\label{theorem_b3_even}
  For any even $n\ge 4$, there is an odd cover of $K_n^{(3)}$ of size $n/2$.
 \end{thm}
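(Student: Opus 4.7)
My plan is to give an explicit construction of $n/2$ complete $3$-partite $3$-graphs whose characteristic functions sum modulo $2$ to the all-ones vector on $\binom{[n]}{3}$. I write $n=2m$ and label the vertex set as $m$ pairs $P_i = \{v_i^{(0)}, v_i^{(1)}\}$, $i \in [m]$. For each $i \in [m]$ I would define $T_i$ to be the complete $3$-partite $3$-graph with parts
\[
A_i = P_i, \qquad B_i = \{v_j^{(0)} : j < i\} \cup \{v_j^{(1)} : j > i\}, \qquad C_i = [n] \setminus (A_i \cup B_i),
\]
so that $A_i, B_i, C_i$ partition $[n]$, and $B_i, C_i$ split each pair $P_j$ with $j \neq i$ across them in an ordered way that depends on whether $j<i$ or $j>i$.

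The verification then splits by the pair-structure of a triple. For a triple of the form $P_j \cup \{v\}$ with $v \in P_k$, $k \neq j$: in $T_j$ both vertices of $P_j$ lie together in $A_j$; in $T_i$ with $i \notin \{j, k\}$ all three triple vertices lie outside $P_i = A_i$, so the triple fails to meet every part; and in $T_k$ the vertex $v$ is alone in $A_k$ while $P_j$ is split across $B_k, C_k$, so $T_k$ covers the triple exactly once. Hence this triple has coverage count~$1$, which is odd.

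For a transversal triple $\{v_{i_1}^{(b_1)}, v_{i_2}^{(b_2)}, v_{i_3}^{(b_3)}\}$ with $i_1 < i_2 < i_3$, only $T_{i_1}, T_{i_2}, T_{i_3}$ can possibly cover it (since for any other $T_k$ none of the three vertices sits in $A_k = P_k$). Plugging in the rule for $B_i$, one checks that $T_{i_1}$ covers the triple iff $b_2 \neq b_3$, $T_{i_2}$ covers iff $b_1 = b_3$, and $T_{i_3}$ covers iff $b_1 \neq b_2$. A short case check over $(b_1, b_2, b_3) \in \{0,1\}^3$ then confirms that exactly one or three of these three conditions holds in each case, so the total coverage count is always odd.

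The main obstacle will be finding the right splitting rule for $B_i$ and $C_i$: that they split each $P_j$ ($j \neq i$) is forced by the pair-containing case, but within that family the symmetric choices (for instance taking $B_i$ to consist of all the ``zero'' vertices outside $P_i$) fail for all-zero transversal triples, and it is the asymmetric ordered rule---depending on the sign of $j - i$---that balances the three parity conditions in the transversal case.
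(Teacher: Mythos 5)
Your construction is correct, and it is in fact the same as the paper's: identifying each pair $P_i$ with the diameter $\{i,i+k\}$ of the circular construction on $\mathbb{Z}_{2k}$, your parts $A_i,B_i,C_i$ coincide (up to swapping $B_i$ and $C_i$) with the parts of the paper's $H_i$, and your parity check over $(b_1,b_2,b_3)$ is just a relabelled version of the paper's verification. So this is essentially the same approach, presented with a linear order on pairs instead of rotational symmetry.
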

 \begin{proof}
  Let $n=2k$ and identify the vertices of $K_{2k}^{(3)}$ with $2k$ points around a circle.
  Consider the $k$ complete $3$-partite $3$-graphs, each obtained by taking the two points on a diameter as one part, while the two remaining intervals are the other two parts.
  More precisely, let the vertex set be $\mathbb{Z}_{2k}$, the set of integers modulo $2k$.
  For each $0\le i\le k-1$, let $H_i$ be the complete $3$-partite $3$-graph with parts $\{i, i+k\}$, $\{i+1, i+2\ldots, i+k-1\}$, and $\{i+k+1, i+k+2, \ldots, i-1\}$.
  We claim that $H_0, H_1, \ldots, H_{k-1}$ form an odd cover of $K_{2k}^{(3)}$.
  For convenience, we also write $H_{k+i}$ for $H_i$.

  Let $A$ be a $3$-set.
  If two points of $A$ are opposite (i.e., of the form $i$ and $k+i$), then it is clear that $A$ is in exactly one of the $H_i$, namely $H_{a}$, where $a$ is the other point of $A$.
  Now suppose no two points of $A$ are opposite.
  By rotation invariance, we may assume that $A = \{0,b,c\}$ for some $b < k$.
  It is then easy to check that if $k < c < k+b$ then $A$ is in three of the $H_i$, and otherwise $A$ is in exactly one of the $H_i$.
 \end{proof}

 As a consequence of Theorem~\ref{theorem_b3_even}, there is an odd cover of $K_{2k-1}$ of size $k$, by taking the link of any vertex in the odd cover of $K_{2k}^{(3)}$. This establishes Conjecture 7.2 in \cite{buchanan}.

 \begin{corollary}
  For any odd $n\ge 3$, we have $b(n) = (n+1)/2$. 
 \end{corollary}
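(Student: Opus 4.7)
My plan is to deduce the corollary by combining Theorem~\ref{theorem_b3_even} with the elementary link operation that was already highlighted in Section~1 (the same operation used there to show $b_r(n)\ge b_{r-1}(n-1)$), together with the lower bound of Buchanan, Clifton, Culver, Nie, O'Neill, Rombach and Yin quoted in the introduction.

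First, since $n$ is odd I would set $n+1 = 2k$ and apply Theorem~\ref{theorem_b3_even} to obtain a family $H_0, H_1, \ldots, H_{k-1}$ of complete 3-partite 3-graphs that odd-covers $K_{n+1}^{(3)}$. Next I would fix an arbitrary vertex $v$ and, for each $i$, let $L_i$ be the link of $v$ in $H_i$: if the three parts of $H_i$ are $A,B,C$ with $v \in A$, then $L_i$ is the complete bipartite graph between $B$ and $C$; if $v$ lies in none of the parts of $H_i$, then $L_i$ is empty and we discard it. For any pair $\{u,w\}$ drawn from the remaining $n$ vertices, the edge $\{u,w\}$ belongs to $L_i$ exactly when the triple $\{u,v,w\}$ belongs to $H_i$, so the number of $L_i$ covering $\{u,w\}$ equals the number of $H_i$ covering $\{u,v,w\}$, which is odd by construction. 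Hence the (nonempty) $L_i$ form an odd cover of $K_n$ by at most $k = (n+1)/2$ complete bipartite graphs, giving $b(n) \le (n+1)/2$.

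For the matching lower bound $b(n) \ge (n+1)/2$ I would simply cite the result of Buchanan et~al.~\cite{buchanan} recalled in the introduction, which asserts that for every odd $n$ we have $b(n) \in \{(n+1)/2,(n+3)/2\}$. Combining the two bounds then yields $b(n) = (n+1)/2$. Since the link construction, the parity check, and the invocation of the lower bound are each essentially immediate, there is no genuine obstacle here; the work has already been done inside Theorem~\ref{theorem_b3_even}, and the corollary is just the shadow that the symmetric 3-uniform construction casts on the graph case.
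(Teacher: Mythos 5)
Your proposal is correct and follows essentially the same route as the paper: the paper also obtains the upper bound by taking the link of a vertex in the size-$(n+1)/2$ odd cover of $K_{n+1}^{(3)}$ from Theorem~\ref{theorem_b3_even}, with the matching lower bound $(n+1)/2$ supplied by the result of Buchanan et al.\ quoted in the introduction. (Your caveat about $v$ lying in no part is vacuous here, since the three parts of each $H_i$ partition the vertex set, but it does no harm.)
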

 
 While our simple construction above for complete $3$-graphs is highly symmetric, the resulting odd covers for complete graphs are not.
 We do not see how to construct symmetric odd covers of $K_n$ of size $b(n)$ for odd $n$ in general.

 We now turn to $b_3(n)$ for odd values of $n$. 

 \begin{thm}\label{theorem_b3_power-three}
  Let $n = 3^k$ for some $k\ge 1$.
  Then there is an odd cover of $K_n^{(3)}$ of size $(n-1)/2$.
 \end{thm}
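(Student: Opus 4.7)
The plan is to identify the $n = 3^k$ vertices with the vector space $\mathbb{F}_3^k$ and to take one complete 3-partite 3-graph for each $1$-dimensional linear subspace of $\mathbb{F}_3^k$. Concretely, for each pair $\{v,-v\}$ of nonzero vectors, let $H_v$ be the complete 3-partite 3-graph whose three vertex classes are the affine hyperplanes $\{x \in \mathbb{F}_3^k : x \cdot v = t\}$ for $t \in \mathbb{F}_3$. Since $H_v = H_{-v}$ depends only on the line spanned by $v$, there are exactly $(3^k-1)/2 = (n-1)/2$ such 3-graphs, matching the desired size.

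To check that these form an odd cover, fix a 3-set $A = \{a, b, c\}$; it lies in $H_v$ iff the triple $(v \cdot a, v \cdot b, v \cdot c)$ is a permutation of $(0, 1, 2)$. Let $S \subset \mathbb{F}_3^3$ denote the six permutations of $(0,1,2)$ and $\Psi\colon \mathbb{F}_3^k \to \mathbb{F}_3^3$ the linear map $v \mapsto (v\cdot a, v\cdot b, v\cdot c)$. Since $-S = S$ and $\Psi(0) = 0 \notin S$, the set $\Psi^{-1}(S)$ partitions into $\{v,-v\}$ pairs, so the number of $H_v$'s covering $A$ equals $|\Psi^{-1}(S)|/2$; and since $\Psi$ has constant fibre size, one has $|\Psi^{-1}(S)| = |V\cap S|\cdot 3^{k-d}$, where $V = \mathrm{im}(\Psi)$ and $d = \dim V = \dim \mathrm{span}\{a,b,c\}$.

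The verification then splits into cases on $d$. For $d=1$ we must have $\{a,b,c\}=\{0,w,-w\}$ and one checks $V=\{(0,t,-t):t\in\mathbb{F}_3\}$ meets $S$ in two elements, giving count $3^{k-1}$. For $d=3$ we have $V=\mathbb{F}_3^3$, so $|V\cap S|=6$ and count $3^{k-2}$. The interesting case is $d=2$: here $V$ is the kernel of a unique (up to scalar) nonzero linear form, corresponding to the unique linear relation $\lambda_1 a+\lambda_2 b+\lambda_3 c=0$ among $a,b,c$. I would first observe that $\lambda_1,\lambda_2,\lambda_3$ cannot all be distinct in $\mathbb{F}_3$, since that would force one coefficient to be $0$ and the other two to be $1,2$, which would collapse two of $a,b,c$. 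So either $\lambda$ is a scalar multiple of $(1,1,1)$ (precisely when $a+b+c=0$) or exactly two of the $\lambda_i$ are equal, say $\lambda_i=\lambda_j\neq\lambda_k$. Using the identity $0+1+2\equiv 0 \pmod 3$: in the first sub-case every permutation lies in $V$, giving $|V\cap S|=6$ and count $3^{k-1}$; in the second, substituting $u_i+u_j+u_k=0$ into $\lambda_i u_i+\lambda_j u_j+\lambda_k u_k=0$ forces $u_k=0$, so only the two permutations with a zero in position $k$ lie in $V$, giving $|V\cap S|=2$ and count $3^{k-2}$. In every case the count is a power of $3$, hence odd, and so every 3-set is covered an odd number of times.

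The main obstacle is the $d=2$ analysis — both ruling out the ``three distinct $\lambda_i$'' possibility from the distinctness of $a,b,c$ and doing the parity bookkeeping for the two surviving sub-cases. Once that is in place, the remaining cases $d=1$ and $d=3$ are essentially immediate from the fibre-size formula for $\Psi$.
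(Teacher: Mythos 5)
Your construction is exactly the paper's: index the complete $3$-partite $3$-graphs by the lines $\{v,-v\}$ of $\mathbb{F}_3^k$, with parts the affine hyperplanes $\{x\colon x\cdot v=t\}$, giving $(n-1)/2$ graphs, and your case checks yield the same odd counts ($3^{k-1}$ or $3^{k-2}$). The only difference is organisational -- the paper first translates so that $0\in A$ (using affine invariance) and then needs only two cases, whereas you analyse $\dim\mathrm{span}\{a,b,c\}$ directly, with the extra sub-case analysis via the unique linear relation; both verifications are correct.
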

 \begin{proof}
  We identify the vertex set of $K_n^{(3)}$ with $\mathbb{F}_3^k$, the set of all vectors of length $k$ over the finite field $\mathbb{F}_3$.
  For each nonzero vector $x$, let $H_{x}$ be the complete $3$-partite $3$-graphs obtained by partitioning $\mathbb{F}_3^k$ into the three affine planes 
  $\{y\colon x \cdot y = 0\}$, $\{y\colon x \cdot y = 1\}$, and $\{y\colon x \cdot y = 2\}$.
  Here the dot product of two vectors $x$ and $y$ is defined by $x \cdot y = \sum x_iy_i$.
  Clearly, $H_{x} = H_{2x}$ for any $x\neq 0$, and so we have defined $(n-1)/2$ complete $3$-partite $3$-graphs.
  We claim that these $(n-1)/2$ complete $3$-partite $3$-graphs form an odd cover of $K_n^{(3)}$.

  Let $A$ be a $3$-set.
  Note that by affine invariance, we may assume that $A = \{0, a, b\}$.
  There are now two cases to check: either $b = 2a$, or else $a$ and $b$ are linearly independent.
  If $b = 2a$, then $A$ is in $H_{x} = H_{2x}$ if and only if $x \cdot a = 1$, and there are exactly $3^{k-1}$ such vectors $x$.
  For the case when $a$ and $b$ are linearly independent, $A$ is in $H_{x}$ if and only if $x \cdot a \neq0$, $x \cdot b \neq 0$, and $x \cdot a \neq x \cdot b$ -- there are exactly $3^{k-2}$ such vectors that are pairwise linearly independent.
 \end{proof}

 Again, by taking the link of a vertex, we deduce that there is an odd cover of $K_{3^k-1}$ of size $(3^k-1)/2$ for any $k\ge 1$.
 Observe that $3^k - 1\equiv 2\pmod{8}$ for odd $k$, and so we obtain infinitely many new values of $n\equiv 2\pmod{8}$ for which $b(n) = n/2$.

 \begin{corollary}
  Let $k\ge 1$ and $n = 3^k - 1$. Then $b(n) = n/2$.
 \end{corollary}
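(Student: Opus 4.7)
The plan is to combine the upper bound from Theorem~\ref{theorem_b3_power-three} with the known lower bound of Babai and Frankl, and to verify the claim of novelty by working modulo $8$. Concretely, first I would invoke Theorem~\ref{theorem_b3_power-three} to obtain an odd cover $\mathcal{F}$ of $K_{3^k}^{(3)}$ of size $(3^k-1)/2$. I would then pick an arbitrary vertex $v$ and form its link: for each complete $3$-partite $3$-graph $H \in \mathcal{F}$, delete the part of $H$ containing $v$ to obtain a complete bipartite graph on the remaining $3^k-1$ vertices. By the link argument already spelled out in the introduction (just below the definition of $b_r(n)$), these complete bipartite graphs form an odd cover of $K_{3^k-1}$, yielding $b(n) \le (3^k-1)/2 = n/2$.

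For the matching lower bound, I would simply cite the Babai--Frankl bound $b(n) \ge \lfloor n/2 \rfloor$ quoted at the start of the introduction; since $n=3^k-1$ is even, this gives $b(n) \ge n/2$. Combining the two inequalities gives $b(n) = n/2$.

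Finally, to justify the parenthetical claim that infinitely many of these values are new, I would compute $3^k \bmod 8$: since $3^2 \equiv 1 \pmod{8}$, we have $3^k \equiv 3 \pmod 8$ for odd $k$ and $3^k \equiv 1 \pmod 8$ for even $k$, so $n = 3^k-1 \equiv 2 \pmod 8$ when $k$ is odd and $n \equiv 0 \pmod 8$ when $k$ is even. The values $n \equiv 0 \pmod 8$ were already handled in \cite{buchanan}, but the odd values of $k$ produce infinitely many values with $n \equiv 2 \pmod 8$ not previously known to satisfy $b(n) = n/2$.

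There is no real obstacle here, since all the work is done in Theorem~\ref{theorem_b3_power-three}; the corollary is just an application of the link construction plus the trivial lower bound, parallel in structure to the corollary immediately following Theorem~\ref{theorem_b3_even}.
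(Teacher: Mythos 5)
Your proposal is correct and matches the paper's argument: the paper likewise obtains the upper bound $b(3^k-1)\le (3^k-1)/2$ by taking the link of a vertex in the odd cover from Theorem~\ref{theorem_b3_power-three}, with the matching lower bound coming from the Babai--Frankl bound $b(n)\ge \lfloor n/2\rfloor$, and the same computation $3^k\equiv 3\pmod 8$ for odd $k$ identifies the new values $n\equiv 2\pmod 8$.
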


 For the case where $n\equiv 1\pmod{8}$, we first remark that our construction in Theorem~\ref{theorem_b3_even} is far from unique.
 Indeed, given any $k$ vectors $\{a^{(i)}\colon 1\le i\le k\}\subset \{0,1,-1\}^{2k}$ with the property that $a_j^{(i)} = 0$ if and only if $i=j$, and $a_j^{(i)} = -a_i^{(j)}$ for any $i\neq j$, we may identify the vertex set of $K_{2k}^{(3)}$ with $\{a^{(i)}\colon 1\le i\le k\}\cup\{-a^{(i)}\colon 1\le i\le k\}$.
 Then it is straightforward to check that the $2k$ natural tripartitions $A_j\cup B_j\cup C_j$ of the vertex set, where $A_j, B_j, C_j$ each consists of the vertices whose $j$th entry is $1, -1, 0$, respectively, induce an odd cover of $K_{2k}^{(3)}$. 
 For $n = 8k$, one such set of $4k$ vectors was used by Buchanan, Clifton, Culver, Nie, O'Neill, Rombach and Yin~\cite{buchanan} to show that $b(8k) = 4k$.
 More precisely, they defined the vectors $\{a^{(i)}\colon 1\le i\le 2k\}\subset \{0,1,-1\}^{4k}$ satisfying the above property to be such that for any $j>i$, $a_j^{(i)} = - 1$ if and only if $j\ge i+2$ or $j=i+1$ when $i\equiv 0,1\pmod{4}$.
 And then they showed that the $4k$ complete bipartite graphs with parts $A_j$ and $B_j$ form an odd cover of $K_{8k}$.
 This can then be easily extended to an odd cover of $K_{8k+1}^{(3)}$ by adding a new vertex $v$. 
 Indeed, one can check that the $4k$ complete $3$-partite $3$-graphs with parts $A_j$, $B_j$, and $C_j\cup \{v\}$ form an odd cover of $K_{8k+1}^{(3)}$.

 \begin{thm}\label{theorem-b3-1mod8}
  Let $n = 8k + 1$ for some $k\ge 1$. Then there is an odd cover of $K_n^{(3)}$ of size $4k$.\qed
 \end{thm}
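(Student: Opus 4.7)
The plan is to verify the construction already described in the paragraph preceding the theorem: take the Buchanan et al.~vectors $\{a^{(i)}\}$ with associated tripartitions $(A_j, B_j, C_j)$ of the $8k$-element vertex set, and form the $4k$ complete $3$-partite $3$-graphs $H_j$ with parts $A_j$, $B_j$, and $C_j \cup \{v\}$ for the new vertex $v$. I would split the verification according to whether a $3$-set contains $v$.

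For a $3$-set $\{v, a, b\}$, the vertex $v$ lies in $C_j \cup \{v\}$ automatically, so membership in $H_j$ is equivalent to $\{a, b\}$ meeting both $A_j$ and $B_j$. Hence the number of $H_j$ containing $\{v, a, b\}$ equals the number of bipartite graphs $(A_j, B_j)$ in the Buchanan et al.~odd cover of $K_{8k}$ that contain the edge $\{a, b\}$, which is odd by hypothesis.

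For a $3$-set $\{x, y, z\}$ disjoint from $v$, membership in $H_j$ is equivalent to $\{x, y, z\}$ hitting each of $A_j, B_j, C_j$, and the task reduces to showing that the tripartitions themselves form an odd cover of $K_{8k}^{(3)}$ (the general claim made about the ``natural tripartitions'' in the paragraph preceding the theorem). Writing $x = \epsilon_1 a^{(i_1)}$, $y = \epsilon_2 a^{(i_2)}$, $z = \epsilon_3 a^{(i_3)}$, I would first handle the antipodal subcase (say $z = -x$) by noting that $j = i_1$ fails because $x$ and $-x$ both lie in $C_{i_1}$, that $j \notin \{i_1, i_2\}$ fails because $C_j$ is not met, and that $j = i_2$ succeeds (since $y \in C_{i_2}$ while $x, -x$ split between $A_{i_2}$ and $B_{i_2}$); so exactly one $H_j$ covers the $3$-set. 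In the main subcase where $i_1, i_2, i_3$ are distinct, only $j \in \{i_1, i_2, i_3\}$ can contribute, and the condition that $j = i_s$ contributes reduces to a sign equation of the form $\epsilon_t \epsilon_u a^{(i_s)}_{i_t} a^{(i_s)}_{i_u} = -1$ with $\{s,t,u\}=\{1,2,3\}$. The product of the three resulting signs telescopes, using the identity $a^{(i)}_j a^{(j)}_i = -1$, to $(-1)^3 = -1$, so an odd number (either one or three) of the candidate indices contribute.

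The main obstacle is this final parity identity: the antisymmetry constraint $a^{(i)}_j = -a^{(j)}_i$, originally imposed by Buchanan et al.~to obtain an odd cover of edges, turns out to be precisely what is needed to obtain an odd cover of triples, as it forces the three-fold product of the sign conditions to equal $-1$. Once this point is seen, the rest of the argument is bookkeeping.
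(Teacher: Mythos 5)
Your proposal is correct and takes essentially the same route as the paper: the paper's (sketched) proof is exactly the construction you verify, appending the new vertex $v$ to the zero classes $C_j$ of the Buchanan et al.\ tripartitions, with triples through $v$ handled by their odd cover of $K_{8k}$ and triples avoiding $v$ handled by the claim that the tripartitions odd-cover $K_{8k}^{(3)}$. Your antipodal-pair analysis and the telescoping sign identity $a^{(i)}_j a^{(j)}_i=-1$ are precisely the ``straightforward to check'' details the paper leaves to the reader.
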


 Since $\lfloor n/2\rfloor \le b_3(n) \le b_3(n+1)$ for any $n\ge 3$, the above results on $b_3(n)$ may be summarised as follows.

 \begin{corollary}
  For any even $n$ we have $b_3(n)=n/2$, while for any odd $n$ we have 
  $b_3(n)=(n-1)/2$ or $(n+1)/2$. Moreover, for odd $n$ that are either $1 \pmod 8$ or are a power of
  3 we have $b_3(n)=(n-1)/2$.
 \end{corollary}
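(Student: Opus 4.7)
The plan is to extract the corollary by combining the lower bound $b_3(n) \ge \lfloor n/2 \rfloor$ (established earlier via the link argument) with the three upper-bound constructions of Theorems \ref{theorem_b3_even}, \ref{theorem_b3_power-three} and \ref{theorem-b3-1mod8}, together with the simple monotonicity $b_3(n) \le b_3(n+1)$. No new ideas are required; the work is merely routing each parity class to the appropriate construction. For even $n \ge 4$, Theorem \ref{theorem_b3_even} gives $b_3(n) \le n/2$, which meets the lower bound $\lfloor n/2 \rfloor = n/2$, so equality holds.

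For general odd $n$, the lower bound is $\lfloor n/2 \rfloor = (n-1)/2$. For the matching upper bound, I would invoke the monotonicity $b_3(n) \le b_3(n+1)$, which holds because any odd cover of $K_{n+1}^{(3)}$ restricts to an odd cover of $K_n^{(3)}$ of the same size: removing one vertex from each part of a complete $3$-partite $3$-graph produces another complete $3$-partite $3$-graph (possibly empty), and the parity of containment of any $3$-set on the remaining ground set is unchanged. Applying the even case to $n+1$ then gives $b_3(n) \le (n+1)/2$, pinning $b_3(n)$ to $(n-1)/2$ or $(n+1)/2$.

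Finally, for the two distinguished odd families, Theorems \ref{theorem-b3-1mod8} and \ref{theorem_b3_power-three} supply odd covers of size exactly $(n-1)/2$ when $n \equiv 1 \pmod 8$ and when $n = 3^k$, respectively, which match the lower bound and hence force $b_3(n) = (n-1)/2$ in both cases. Since every ingredient is already in place, there is no genuine obstacle; the only point that I would spell out explicitly is the restriction argument giving $b_3(n) \le b_3(n+1)$, which is the single fact doing work beyond the plain citations of the previous theorems.
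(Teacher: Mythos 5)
Your proposal is correct and follows exactly the paper's route: the paper deduces the corollary from the lower bound $\lfloor n/2\rfloor \le b_3(n)$, the monotonicity $b_3(n)\le b_3(n+1)$ (which you rightly justify by restricting an odd cover to $n$ vertices), and the upper-bound constructions of Theorems~\ref{theorem_b3_even}, \ref{theorem_b3_power-three} and \ref{theorem-b3-1mod8}. Nothing further is needed.
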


 It would be very interesting to determine what happens for other values of $n$. 
 
\section{Odd covers of complete 4-graphs}

 In this section, we will show that $b_4(n) < f_4(n)$ for $n$ sufficiently large.
 The proof uses a similar approach to that used in \cite{leader} in proving an upper bound on $f_4(n)$.
 Let $h(m,n)$ be the minimum number of products of complete bipartite graphs (that is, sets of the form $E(K_{a,b})\times E(K_{c,d})$) needed so that each element of $E(K_m)\times E(K_n)$ is in an odd number of these products of complete graphs.
 Clearly, we have $h(m,n)\le b(m)b(n)\le \frac{(m+2)(n+2)}{4}$, by taking the product of complete bipartite graphs in the odd covers of $K_m$ and $K_n$.
 It turns out that this trivial upper bound for $h(m,n)$ is
 enough for our purposes.
 
 The proof of the following result is similar to
 the iteration used in Proposition 1 in \cite{leader}. 

 \begin{thm}\label{theorem_b4}
  $b_4(n) \le \frac{1}{4}(1+o(1))\binom{n}{2}$.
 \end{thm}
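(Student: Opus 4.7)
The plan is to split the vertex set as $X \sqcup Y$ with $|X| = \lceil n/2 \rceil$ and $|Y| = \lfloor n/2 \rfloor$, and to classify each $4$-subset $S$ by its intersection pattern $(|S \cap X|, |S \cap Y|) \in \{(4,0), (3,1), (2,2), (1,3), (0,4)\}$. For each of the five classes one constructs an odd cover using complete $4$-partite $4$-graphs whose parts respect the split, so that only $4$-sets of that class arise as edges; the parity contributions from different classes are then independent. Adding the costs will yield a recursion whose iteration produces the claimed bound.

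For the homogeneous classes $(4,0)$ and $(0,4)$, recurse inside $X$ and inside $Y$: this uses at most $2 b_4(\lceil n/2 \rceil)$ complete $4$-partite $4$-graphs, all four parts of each living on a single side. For the $(1,3)$ class, take an odd cover of $K_Y^{(3)}$ of size $b_3(|Y|) \le n/4 + O(1)$, guaranteed by Theorem~\ref{theorem_b3_even}; for each tripartite $3$-graph $B_1 | B_2 | B_3$ in it, form the complete $4$-partite $4$-graph with parts $X, B_1, B_2, B_3$. A $(1,3)$ set $\{x\} \cup \{y_1, y_2, y_3\}$ lies in this $4$-graph precisely when $\{y_1, y_2, y_3\}$ lies in $B_1 | B_2 | B_3$, so the parity counts match. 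The $(3,1)$ class is handled symmetrically, and these two classes together cost $O(n)$.

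The key step is the $(2,2)$ class. Identify a $(2,2)$ $4$-set $\{x_1, x_2, y_1, y_2\}$ (with $x_i \in X$, $y_j \in Y$) with the element $(\{x_1, x_2\}, \{y_1, y_2\}) \in E(K_X) \times E(K_Y)$, and take an odd cover of $E(K_X) \times E(K_Y)$ by products of complete bipartite graphs of size at most $h(|X|, |Y|) \le b(|X|)\, b(|Y|) \le n^2/16 + O(n)$, using $b(m) \le (m+2)/2$. Each product $E(K_{A_1, A_2}) \times E(K_{B_1, B_2})$ with $A_i \subseteq X$ and $B_j \subseteq Y$ translates directly into the complete $4$-partite $4$-graph with parts $A_1, A_2, B_1, B_2$: its edges are exactly the $(2,2)$ $4$-sets whose associated pair lies in the product, so parities match.

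Summing all four contributions gives $b_4(n) \le 2 b_4(n/2) + n^2/16 + O(n)$. Iterating this (for $n = 2^k$, say), the top-level cost is $\sum_{i \ge 0} 2^i \cdot (n/2^i)^2 / 16 = (n^2/16)\sum_{i \ge 0} 2^{-i} = n^2/8$, yielding $b_4(n) \le n^2/8 + o(n^2) = \tfrac{1}{4}\binom{n}{2}(1 + o(1))$; for general $n$ the slightly unbalanced split contributes only $o(n^2)$. The main obstacle to verify carefully is the $(2,2)$ step, where one needs the parity-preserving correspondence between products of complete bipartite graphs and complete $4$-partite $4$-graphs; the rest is bookkeeping, since $4$-graphs from different classes cover disjoint families of $4$-sets by construction.
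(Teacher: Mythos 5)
Your proposal is correct and follows essentially the same route as the paper: split the vertex set into two halves, cover the $(4,0)/(0,4)$ classes recursively, the $(3,1)/(1,3)$ classes via odd covers of complete $3$-graphs, and the $(2,2)$ class via the quantity $h(m,n)\le b(m)b(n)$, then iterate the recursion $b_4(n)\le 2b_4(n/2)+n^2/16+O(n)$ to get $n^2/8+o(n^2)$. The paper phrases the iteration as an induction with error term $Cn\log n$, but the decomposition and all the key estimates are the same as yours.
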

 \begin{proof}
  We will show that 
  \begin{equation}\label{eqn_b4}
   b_4(n)\le \frac{n^2}{8} + Cn\log n
  \end{equation}
  for some sufficiently large $C$.
  This is clearly true for $n\le 4$. So assume $n>4$ and the inequality \eqref{eqn_b4} holds for $1,2,\ldots,n-1$. 
  We will consider the case when $n$ is even - the case when $n$ is odd is similar.
  We say that a set of $4$-sets is \emph{odd covered} by a set of complete $4$-partite $4$-graphs if each of these $4$-sets is in an odd number of these complete $4$-partite $4$-graphs, and every other $4$-sets is in an even number of them.
  
  In order to odd cover the edge set of $K_n^{(4)}$, we can split the $n$ vertices into two equal parts, say $V\left(K_n^{(4)}\right) = A\cup B$, where $|A|=|B|= n/2$. The sets of 4-edges $\{e: e\subset A$\} and $\{e:e\subset B$\} can each be odd covered by $b_4(n/2)$ complete $4$-partite $4$-graphs; the sets of 4-edges $\{e: |e\cap A|=3\}$ and $\{e: |e\cap B|=3\}$ can each be odd covered by $b_3(n/2)$ complete 4-partite 4-graphs; while the remaining set of 4-edges $\{e: |e\cap A| = |e\cap B|=2\}$ can be odd covered by $h(n/2,n/2)$ complete $4$-partite $4$-graphs. 
  So by the induction hypothesis, and recalling the bounds $h(n/2,n/2)\le \frac{1}{4}(\frac{n}{2}+2)^2$ and $b_3(n/2)\le \frac{1}{2}(\frac{n}{2}+1)$, we have
  \begin{align*}
   b_4(n) &\le 2b_4(n/2) + h(n/2, n/2) + 2b_3(n/2)\\
	  &\le 2\left(\frac{n^2}{32}+\frac{Cn}{2}\log\left(\frac{n}{2}\right)\right) + \frac{1}{4}\left(\frac{n}{2}+2\right)^2 + \left(\frac{n}{2}+1\right)\\
	  &\le \frac{n^2}{8} + Cn\log n,
  \end{align*}
  completing the proof.
 \end{proof}

 Recalling that $f_3(n) = n-2$ and $f_4(n) \ge \frac{1}{3}(1+o(1))\binom{n}{2}$, our bounds for $b_3(n)$ in Section~\ref{sec2} and Theorem~\ref{theorem_b4} show that $b_r(n)$ is asymptotically smaller than $f_r(n)$ for $r = 3, 4$. This answers a question asked in \cite{buchanan}
 for these values of $r$.

Could this also be the case for larger values of $r$? 
 As mentioned in the Introduction, the methods used for proving asymptotic upper bounds for $f_r(n)$ can similarly be used to establish upper bounds for $b_r(n)$.
 In fact, observing that 
 $$b_{2s}(n) \le 2b_{2s}(n/2) + 2b_{2s-1}(n/2) + \sum_{t = 2}^{2s-2} b_t(n/2)b_{2s-t}(n/2),$$
 one can obtain that $b_{r}(n) \le \frac{1}{2^{r/2}}(1+o(1))\binom{n}{r/2}$ when $r$ is even.
 Unfortunately, this bound is not sufficiently strong enough to show that $b_r(n) < f_r(n)$ for general $r$.

\end{document}